\documentclass[12pt,reqno]{amsart}

\setlength{\textheight}{23cm}
\setlength{\textwidth}{16cm}
\setlength{\topmargin}{-0.8cm}
\setlength{\parskip}{0.3\baselineskip}
\hoffset=-1.4cm

\usepackage{amssymb}

\newtheorem{theorem}{Theorem}[section]
\newtheorem{proposition}[theorem]{Proposition}
\newtheorem{lemma}[theorem]{Lemma}

\theoremstyle{remark}
\newtheorem{remark}[theorem]{Remark}

\numberwithin{equation}{section}

\begin{document}
\baselineskip=15.5pt

\title[Flat vector bundles over parallelizable manifolds and stability]{On the stability of flat 
complex vector bundles over parallelizable manifolds}

\author[I. Biswas]{Indranil Biswas}

\address{School of Mathematics, Tata Institute of Fundamental
Research, Homi Bhabha Road, Mumbai 400005, India}

\email{indranil@math.tifr.res.in}

\author[S. Dumitrescu]{Sorin Dumitrescu}

\address{Universit\'e C\^ote d'Azur, CNRS, LJAD, France}

\email{dumitres@unice.fr}

\author[M. Lehn]{Manfred Lehn}

\address{Institut f\"ur Mathematik, Johannes Gutenberg Universit\"at Mainz, 55099 Mainz, Germany}

\email{lehn@mathematik.uni-mainz.de}

\subjclass[2000]{53B21, 53C56, 53A55}

\keywords{Parallelizable manifold, stable bundle, cocompact lattice, deformations.}

\date{}

\begin{abstract}

We investigate the flat holomorphic vector bundles over compact complex parallelizable manifolds $G 
/ \Gamma$, where $G$ is a complex connected Lie group and $\Gamma$ is a cocompact lattice in it. 
The main result proved here is a structure theorem for flat holomorphic vector bundles $E_\rho$ 
associated to any irreducible representation $\rho \,: \,\Gamma \,\longrightarrow\, \text{GL}(r, 
{\mathbb C})$. More precisely, we prove that $E_{\rho}$ is holomorphically isomorphic to a vector 
bundle of the form $E^{\oplus n}$, where $E$ is a stable vector bundle. All the rational Chern 
classes of $E$ vanish, in particular, its degree is zero.

We deduce a stability result for flat holomorphic vector bundles $E_{\rho}$ of rank 2 over $G/ \Gamma$. If an irreducible representation
$\rho \,:\, \Gamma
\,\longrightarrow\, \text{GL}(2, \mathbb {C})$ satisfies the condition
that the induced homomorphism $\Gamma\, \longrightarrow\, {\rm 
PGL}(2, {\mathbb C})$ does not extend to a homomorphism from
$G$, then $E_{\rho}$ is proved to be stable.
\end{abstract}
\maketitle

\vspace{0.1cm}

\noindent{{ R\'esum\'e.}} Nous \'etudions les fibr\'es holomorphes plats sur les vari\'et\'es parall\'elisables compactes $G / \Gamma$ (avec $G$ groupe de Lie connexe complexe et $\Gamma$ r\'eseau cocompact). Notre r\'esultat principal d\'ecrit les fibr\'es holomorphes plats $E_{\rho}$ associ\'es \`a des repr\'esentations irr\'eductibles $\rho \,:\, \Gamma \,\longrightarrow\, \text{GL}(r, {\mathbb C})$. Nous d\'emontrons que ces fibr\'es $E_{\rho}$ sont isomorphes \`a une somme
directe $E^{\oplus n}$, avec $E$ fibr\'e vectoriel stable de degr\'e zero. 

Nous en d\'eduisons un r\'esultat de stabilit\'e concernant les fibr\'es holomorphes plats $E_{\rho}$ de rang $2$ sur les quotients $G /\Gamma$. Si $\rho \,: \,\Gamma \,\longrightarrow\, \text{GL}(2, {\mathbb C})$ est 
une repr\'esentation irr\'eductible telle que le
morphisme induit $\rho'\,:\, \Gamma\, \longrightarrow\, {\rm PGL}(2, {\mathbb C})$ ne s'\'etend pas \`a $G $, alors $E_{\rho}$ est stable.

{\bf Version fran\c caise abr\'eg\'ee}

Nous \'etudions les fibr\'es plats holomorphes sur les vari\'et\'es parall\'elisables compactes $G 
/ \Gamma$, avec $G$ groupe de Lie complexe connexe et $\Gamma$ r\'eseau cocompact. Ces fibr\'es 
plats de rang $r$ sont donn\'es par des repr\'esentations $\rho \,:\, \Gamma
\,\longrightarrow\, \text{GL}(r, 
{\mathbb C})$. Un tel fibr\'e est holomorphiquement trivial si et seulement si
le morphisme $\rho$ s'\'etend en un morphisme 
de groupes de Lie $G \,\longrightarrow\, \text{GL}(r, {\mathbb C})$ (voir, par exemple, \cite[p.~801, Proposition 3.1]{Wi}).

Nous nous int\'eressons \`a la notion de stabilit\'e de ces fibr\'es. M\^eme si pour $G$ non 
ab\'elien, les quotients $G / \Gamma$ ne sont pas k\"ahleriens, ces vari\'et\'es portent des m\' 
etriques {\it balanc\'ees} (i.e. qui ont la propri\'et\'e $d\omega^{m-1}=0$, avec $m$ la 
dimension complexe de la vari\'et\'e) \cite{AB, Bi}. Par rapport \`a ces m\'etriques les notions 
classiques de degr\'e, pente (=degr\'e divis\'e par le rang) et (semi-)stabilit\'e et 
polystabilit\'e au sens des pentes se d\'efinissent comme dans le cas classique (projective ou k\"ahlerien) \cite{HL}.

Le r\'esultat principal de cette note d\'etermine la structure des fibr\'es holomorphes plats (de 
rang $r$) $E_{\rho}$ au-dessus de $G / \Gamma$ qui sont construits \`a partir de repr\'esentations 
irr\'eductibles $\rho \,:\, \Gamma \,\longrightarrow\, \text{GL}(r, {\mathbb C})$. Nous d\'emontrons que ces fibr\'es 
sont n\'ecesairement isomorphes \`a un fibr\'e de la forme $E^{\oplus n},$ avec $E$ fibr\'e 
vectoriel stable de degr\'e zero.

Dans la preuve nous utilisons un r\'esultat de \cite{Bi} qui donne la semistabilit\'e de 
$E_{\rho}$. Ensuite nous d\'emontrons la polystabilit\'e qui est une \'etape importante de la 
preuve.

Une cons\'equence du th\'eor\`eme pr\'ec\'edent est un r\'esultat de stabilit\'e pour les fibr\'es 
plats holomorphes de rang $2$ au-dessus de $G / \Gamma$. Plus pr\'ecis\'ement, si $\rho \,:\, \Gamma \,\longrightarrow\, 
\text{GL}(2, {\mathbb C})$ est une repr\'esentation irr\'eductible telle que la repr\'esentation induite 
$\rho'\,:\, \Gamma\, \longrightarrow\, {\rm PGL}(2, {\mathbb C})$ ne s'\'etend 
pas \`a $G$, alors $E_{\rho}$ est stable.

\section{Introduction}\label{sec1}

An interesting class of compact non-K\"ahler manifolds which generalizes compact complex tori consists
of those manifolds whose holomorphic tangent bundle is holomorphically trivial. By a result 
of Wang~\cite{Wa}, those so-called {\it parallelizable} manifolds are known to be biholomorphic to 
a quotient of a complex connected Lie group $G$ by a cocompact lattice $\Gamma$ in $G$. Those 
quotients $G/ \Gamma$ are K\"ahler exactly when $G$ is abelian (and, consequently, the quotient is 
a complex torus).

In particular, for $G$ nonabelian, the above quotients are not algebraic. Moreover, for $G$ 
semi-simple, the corresponding parallelizable manifolds are known to have algebraic dimension 
zero (meaning that the only meromorphic functions on $G / \Gamma$ are the constant ones).

For those parallelizable manifolds of algebraic dimension zero, Ghys's arguments in \cite{Gh2} 
prove that all foliations and all holomorphic distributions on $G / \Gamma$ are {\it 
homogeneous} (i.e., they all descend from $G$-right invariant foliations (respectively, distributions) 
on $G$). In particular, any complex subbundle of the holomorphic tangent bundle is isomorphic 
to a trivial vector bundle. It was also proved in \cite{DM} that all holomorphic geometric 
structures in Gromov's sense \cite{Gro} (constructed from higher order frame bundles) on 
parallelizable manifolds $G / \Gamma$ of algebraic dimension zero are also necessarily 
homogeneous (e.g. their pull-back on $G$ are $G$-right invariant).

The simplest examples of parallelizable manifolds of algebraic dimension zero are compact 
quotients of $\text{SL}(2, {\mathbb C})$. Those manifolds are closely related to the 
$3$-hyperbolic manifolds in a natural way. Indeed, $\text{PSL}(2, {\mathbb C})$ being the group 
of direct isometries of the hyperbolic $3$-space, the direct orthonormal frame bundle of a 
compact oriented hyperbolic $3$-manifold $V$ is diffeomorphic to a quotient $\text{PSL}(2, {\mathbb C}) 
/ \Gamma$ (with the lattice $\Gamma$ being isomorphic to the fundamental group of $V$). Those 
quotients are both geometrically interesting and very abundant. In particular, they can have 
arbitrarily large first Betti number (see \cite{Mi}, \cite[Section 6.2]{Gh}, or \cite{La}), 
meaning that the rank of the abelianization of $\Gamma$ (modulo torsion) can be arbitrarily 
large. This remark was used by Ghys \cite{Gh} in order to prove that the quotients of 
$\text{SL}(2, {\mathbb C})$ with first Betti number $\geq 1$ are not rigid (as complex 
manifolds). Ghys constructed the corresponding deformation space using group homomorphisms of 
$\Gamma$ into $\text{SL}(2, {\mathbb C}) \times \text{SL}(2, {\mathbb C})$ which are close to 
the natural embedding $\Gamma \,\subset\, \text{SL}(2, {\mathbb C})\times \{I_2\}
\, \subset\, \text{SL}(2, {\mathbb C}) \times \text{SL}(2, {\mathbb C})$, where
$I_2$ is the identity element. The images of those 
homomorphisms are (up to conjugacy) graphs $\gamma\,\longmapsto\, (\gamma,\, \rho(\gamma))$,
$\gamma \,\in\, \Gamma$, of homomorphisms $ \rho \,:\, \Gamma \,\longrightarrow\,
\text{SL}(2, {\mathbb C})$ that are close to the map of $\Gamma$ to the identity
element (the trivial homomorphism).

As soon as the first Betti number of $\Gamma$ is positive, there exists nontrivial group 
homomorphisms $u \,:\, \Gamma \,\longrightarrow\, {\mathbb C}$ and one can consider
$\rho\, :\, \Gamma \,\longrightarrow\, \text{SL}(2, {\mathbb C})$ defined by
$\gamma\, \longmapsto\, \exp (u(\gamma)\xi)$, 
with $\exp$ being the exponential map and $\xi$ some fixed element of $\text{sl}(2, {\mathbb C})$.
Notice that those homomorphisms do not extend to homomorphisms from
$\text{SL}(2, {\mathbb C})$; indeed, $\text{SL}(2, {\mathbb C})$ is a {\it perfect} group
(meaning generated by its commutators)
and hence $\text{SL}(2, {\mathbb C})$ does not admit nontrivial homomorphisms into abelian groups.
This implies that the 
associated flat holomorphic line bundle $E_{\rho}$ over $\text{SL}(2, {\mathbb C}) / \Gamma$ is 
nontrivial \cite[p.~801, Proposition 3.1]{Wi}.

For any $r\, \geq\, 2$, any any element $\xi$ of $\text{Lie}(\text{SL}(r, {\mathbb C}))$,
the previous homomorphisms $u \,:\, \Gamma \,\longrightarrow\, {\mathbb C}$ produce
group homomorphisms $\Gamma \,\longrightarrow\, \text{SL}(r, {\mathbb C})$,
$\gamma\, \longmapsto\, \exp(u(\gamma)\xi)$, taking values in a one parameter subgroup.

For cocompact lattices $\Gamma$ with first Betti number $\geq 2$, Ghys constructed in 
\cite{Gh} group homomorphisms from $\Gamma$ to $\text{SL}(2, {\mathbb C})$ which are close to 
the identity and such that the image is Zariski dense. One can also see \cite{La} in which the 
author constructs many cocompact lattices $\Gamma$ in $\text{SL}(2, {\mathbb C})$ admitting a 
surjective homomorphism onto a nonabelian free group (this also
implies that the lattices can have arbitrarily large first Betti number).
Since nonabelian free groups admit many linear irreducible representations, we get many linear 
irreducible representations of those $\Gamma$ furnishing nontrivial flat holomorphic vector 
bundles over the corresponding quotients $\text{SL}(2, {\mathbb C} )/ \Gamma$.

In this note we deal with flat holomorphic vector bundles over parallelizable manifolds $G / 
\Gamma$. Our main result (see Theorem \ref{thm1} in Section \ref{s2}) is a structure theorem 
for flat holomorphic vector bundles $E_\rho$ given by irreducible representations $\rho \,: \, 
\Gamma \,\longrightarrow\, \text{GL}(r, {\mathbb C})$. We prove that $E_{\rho}$ is isomorphic 
to a direct sum $E^{\oplus n}$, where $E$ is a stable vector bundle.
All the rational Chern classes of $E$ vanish, in particular, the degree of $E$ is zero (see
Remark \ref{rem2}).

As a consequence, we deduce in Section \ref{s3} a stability result for flat holomorphic vector 
bundles $E_{\rho}$ of rank two over $G / \Gamma$. If an
irreducible homomorphism $\rho \,:\, 
\Gamma \,\longrightarrow\, \text{GL}(2, \mathbb {C})$ has the property that
the homomorphism
$\Gamma\, \longrightarrow\, {\rm PGL}(2, {\mathbb C})$ obtained by composing $\rho$ 
with the natural projection of $\text{GL}(2,{\mathbb C})$ to ${\rm PGL}(2, {\mathbb C})$ does not 
extend to a homomorphism from $G$, then $E_{\rho}$ is stable.

It should be mentioned that the stability of vector bundles in the previous results is slope-stability 
with respect to a {\it balanced} metric on the parallelizable manifold (see 
definition and explanations in Section \ref{s2}).

In a further work the authors will address the question of (semi)stability of flat holomorphic 
vector bundles over Ghys's deformations of parallelizable manifolds $\text{SL}(2, {\mathbb C}) / 
\Gamma$ constructed in \cite{Gh}. It should be mentioned that generic small deformations of 
$\text{SL}(2, {\mathbb C}) / \Gamma$ do not admit nontrivial holomorphic $2$-forms (see Lemma 
3.3 in \cite{BD}) and, consequently, Corollary 8 in \cite{FY} implies that those generic small 
deformations admit balanced metrics.

\section{Irreducible representations and polystable bundles}\label{s2}

Let $G$ be a complex connected Lie group and $\Gamma\, \subset\, G$ a discrete subgroup
such that the quotient
\begin{equation}\label{h1}
M\, :=\, G/\Gamma
\end{equation}
is a compact (complex) manifold. Such a $\Gamma$ is called a cocompact lattice in $G$. The left--translation action of $G$ on itself produces a holomorphic action of
$G$ on the complex manifold $M$.
The Lie algebra of $G$ will be denoted by
$\mathfrak g$. Take a maximal compact subgroup $K\, \subset\, G$. Fix a
Hermitian form $h\,\in\, {\mathfrak g}^*\otimes\overline{\mathfrak g}^*$ on ${\mathfrak g}$
such that the action of $K$ on ${\mathfrak g}^*\otimes\overline{\mathfrak g}^*$, given
by the adjoint action of $K$ on ${\mathfrak g}$, fixes $h$. Consider the translations of
$h$ by the right--multiplication action of $G$ on itself. The resulting Hermitian
structure on $G$ descends to a Hermitian
structure on the quotient $M$ in \eqref{h1}. The $(1,\, 1)$--form on
$M$ corresponding to this Hermitian structure will be denoted by $\omega$. We know that
\begin{equation}\label{h2}
d\omega^{m-1}\,=\, 0\, ,
\end{equation}
where $m\, =\, \dim_{\mathbb C} M$ \cite[p.~277, Theorem~1.1(1)]{Bi}. Such a Hermitian metric
structure is called balanced (one can also see \cite{AB}).

For a torsion-free coherent analytic sheaf $F$ on $M$, define
$$
\text{degree}(F)\,:=\, \int_M c_1(\det F)\wedge \omega^{m-1}\, \in\, {\mathbb R}\, ,
$$
where $\det F$ is the determinant line bundle for $F$ \cite[Ch.~V, \S~6]{Ko}. From
\eqref{h1} it follows that the degree is well-defined. Indeed, any two
first Chern forms for $F$ differ by an exact $2$--form on $M$, and
$$
\int_M (d\alpha)\wedge \omega^{m-1}\,=\, -\int_M \alpha\wedge d\omega^{m-1}\,=\, 0\, .
$$
In fact, this show that degree is a topological invariant. Define $$\mu(F)\,:=\,
\frac{\text{degree}(F)}{\text{rank}(F)}\, \in\, {\mathbb R}\, $$ which is called the slope of $F$.
A torsion-free coherent analytic sheaf $F$ on $M$ is called \textit{stable} (respectively,
\textit{semistable}) if for every coherent analytic subsheaf $V\,\subset\, F$
such the $\text{rank}(V)\, \in\, [1\, ,\text{rank}(F)-1]$, the inequality
$\mu(V) \, <\, \mu(F)$ (respectively, $\mu(V) \, \leq\, \mu(F)$)
holds (see \cite[Ch.~V, \S~7]{Ko}). Hence, {\it throughout the paper, (semi)stability means slope-(semi)stability.}

A torsion-free coherent analytic sheaf $F$ is called
\textit{polystable} if the following two conditions hold:
\begin{enumerate}
\item $F$ is semistable, and

\item $F$ is a direct sum of stable sheaves.
\end{enumerate}

Consider any homomorphism
\begin{equation}\label{e2}
\rho\,:\, \Gamma\, \longrightarrow\, \text{GL}(r,{\mathbb C})\, .
\end{equation}
Let $(E_\rho\, ,\nabla^\rho)$ be the flat holomorphic vector bundle
or rank $r$ over $M$ associated to $\rho$. We recall that the total space of
$E_\rho$ is the quotient of $G
\times {\mathbb C}^r$ where two points $(z_1\, ,v_1)\, , (z_2\, ,v_2)\,\in\,
G\times {\mathbb C}^r$ are identified if there is an
element $\gamma\, \in\, \Gamma$ such that $z_2\,=\, z_1\gamma$ and $v_2\,=\,
\rho(\gamma^{-1})(v_1)$. We note that the fiber of $E_\rho$ over the point $e\Gamma\, \in\,
M$, where $e$ is the identity element of $G$, is identified with ${\mathbb C}^r$
by sending $w\, \in\, {\mathbb C}^r$ to the equivalence class of $(e,\, w)$.
The trivial connection on the trivial vector bundle
$G\times {\mathbb C}^r\,\longrightarrow\,
G$ descends to a connection on $E_\rho$ which is denoted by $\nabla^\rho$. The
left--translation action of $G$ on $G$ and the trivial
action of $G$ on ${\mathbb C}^r$ together define an action
of $G$ on $G
\times {\mathbb C}^r$. It descends to an action of $G$
on $E_\rho$. This action of $G$ on $E_\rho$
is a lift of the left--translation action of $G$ on $M$.
In particular, the holomorphic vector bundle $E_\rho$ is equivariant. 

A homomorphism $\rho$ as in \eqref{e2} is called \textit{reducible} if there is no
nonzero proper subspace of ${\mathbb C}^r$ preserved by $\rho(\Gamma)$ for the
standard action of $\text{GL}(r,{\mathbb C})$ on ${\mathbb C}^r$.

\begin{theorem}\label{thm1}
Assume that the homomorphism $\rho$ is irreducible. Then $E_\rho$ is isomorphic to
a vector bundle of the form $E^{\oplus n}$, where $E$ is a stable vector bundle.
\end{theorem}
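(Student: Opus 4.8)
The plan is to combine the semistability result of \cite{Bi} with an averaging argument that upgrades semistability to polystability, and then use the irreducibility of $\rho$ together with the $G$-equivariant structure on $E_\rho$ to show that all the stable summands are isomorphic.

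**Step 1: Semistability.** By \cite[p.~277]{Bi} (or the argument indicated there), the flat bundle $E_\rho$ is semistable of degree $0$, since it carries a flat connection and the degree of any flat bundle over a balanced manifold vanishes (any subsheaf of negative degree would contradict semistability of the ambient flat bundle). So $\mu(E_\rho)=0$ and every subsheaf $V\subset E_\rho$ has $\mu(V)\le 0$.

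**Step 2: Polystability.** This is the key step. A semistable sheaf of slope $0$ admits a (Jordan--Hölder) socle, i.e.\ a maximal polystable subsheaf $S\subset E_\rho$ of slope $0$; I would show $S=E_\rho$. The point is that the socle is canonical, hence preserved by any automorphism of $E_\rho$. Now $E_\rho$ is $G$-equivariant: the left-translation action of the connected group $G$ on $M$ lifts to $E_\rho$. By canonicity the subsheaf $S$ is preserved by this $G$-action, so $S$ descends to a $G$-invariant reflexive subsheaf, i.e.\ (since $G$ acts transitively on $M$) a subbundle whose rank is constant and whose fibre over $e\Gamma$ is a $\Gamma$-invariant subspace of ${\mathbb C}^r$ — wait, more precisely $S$ is the bundle associated to a subrepresentation. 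Here is the cleaner route: pulling back to $G$, the bundle $E_\rho$ becomes the trivial bundle $G\times{\mathbb C}^r$ with its $\Gamma$-action; a $G$-invariant (for left translations) subsheaf of this pulled-back bundle is of the form $G\times W$ for a fixed subspace $W\subset{\mathbb C}^r$, and $\Gamma$-invariance of the subsheaf forces $W$ to be $\rho(\Gamma)$-invariant. Since $\rho$ is irreducible, $W=0$ or $W={\mathbb C}^r$; as $\mathrm{rank}(S)\ge 1$ we get $S=E_\rho$, so $E_\rho$ is polystable.

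**Step 3: All summands are isomorphic.** Write $E_\rho\,\cong\,\bigoplus_{i=1}^k E_i^{\oplus n_i}$ with the $E_i$ stable, pairwise non-isomorphic, each of slope $0$. The isotypic decomposition is canonical, so again the $G$-action permutes the isotypic pieces $E_i^{\oplus n_i}$; since $G$ is connected it must fix each of them, so each $F_i:=E_i^{\oplus n_i}$ is itself $G$-equivariant and descends from a subrepresentation $W_i\subset{\mathbb C}^r$ as in Step 2. By irreducibility of $\rho$ there is only one isotypic component, $k=1$, hence $E_\rho\,\cong\,E^{\oplus n}$ with $E:=E_1$ stable. (The vanishing of the rational Chern classes of $E$, hence $\deg E=0$, follows since the rational Chern classes of $E_\rho$ vanish — it is flat — and $c_1(E_\rho)=n\,c_1(E)$, $c(E_\rho)=c(E)^n$ in rational cohomology, forcing $c_i(E)=0$ rationally.)

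**The main obstacle** I anticipate is making Step 2 rigorous in the analytic (non-algebraic, non-Kähler) category: one must know that a semistable torsion-free sheaf of slope $0$ on the balanced manifold $M$ has a well-defined maximal polystable subsheaf of the same slope (a socle), that this socle is unique hence functorial under automorphisms, and that $G$-invariance of a reflexive subsheaf on the homogeneous space $M=G/\Gamma$ forces it to come from a $\Gamma$-subrepresentation of the fibre. The uniqueness of the socle is the analogue of the classical statement and should follow from the fact that a sum of two polystable slope-$\mu$ subsheaves is again semistable of slope $\mu$, hence polystable; the descent statement uses that $G$ acts transitively and the subsheaf, being reflexive and $G$-invariant, has locally constant rank and is determined by its fibre at one point.
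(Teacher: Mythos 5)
Your Steps 1 and 2 coincide with the paper's proof. Semistability is quoted from \cite{Bi} exactly as the paper does (note that your parenthetical justification of degree zero is circular as phrased; the real reason is that a flat connection kills $c_1$ rationally and the degree is a topological invariant because $d\omega^{m-1}=0$). Polystability is obtained, as in the paper, from the uniqueness of the maximal polystable slope-$0$ subsheaf (the socle), its consequent invariance under the left $G$-action, the transitivity of that action (to upgrade the subsheaf to a subbundle), and the identification of its fibre at $e\Gamma$ with a $\rho(\Gamma)$-invariant subspace of ${\mathbb C}^r$. The foundational point you flag as the main obstacle --- existence and uniqueness of the socle for semistable sheaves with respect to a Gauduchon metric --- is precisely what the paper isolates and proves as a separate proposition, along the lines you sketch.

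Step 3 is where you genuinely diverge, and it contains the one soft spot. First, the action of $g\in G$ covers the left translation $L_g$ of $M$, so $g\cdot S_i$ is the isotypic piece of type $(L_g)_*E_i$, which need not be isomorphic to $E_i$; to know the action permutes the $S_i$ at all you already need that $L_g$, being homotopic to the identity, preserves degrees and hence stability. Granting that, you obtain an \emph{abstract} homomorphism $\sigma\,:\,G\,\longrightarrow\,\mathrm{Sym}(\{1,\dots,k\})$ and assert it is trivial ``since $G$ is connected''. For an action that is not known to be continuous in $g$, connectedness alone is not a proof: you must either show $g\,\longmapsto\,\sigma_g$ is locally constant (e.g.\ via upper semicontinuity of $\dim H^0(M,(g\cdot S_i)^*\otimes S_j)$ in $g$, which vanishes at $g=e$ for $i\neq j$), or observe that a connected Lie group is generated by its one-parameter subgroups, which are divisible, so it admits no nontrivial homomorphism to a finite group. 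Either patch closes the gap, so your route is viable, but as written this step is unjustified. The paper avoids the isotypic permutation entirely: it takes $F$ to be the $F_1$-isotypic summand and shows that the second fundamental form of $F$ for the flat connection $\nabla^\rho$, a section of $\mathrm{Hom}(F,(E_\rho/F)\otimes\Omega_M)$, vanishes because $\Omega_M$ is trivial and $H^0(M,\mathrm{Hom}(F_1,F_i))=0$ for the non-isomorphic stable degree-$0$ summands; hence $F$ is $\nabla^\rho$-parallel and, by irreducibility of the monodromy, equals $E_\rho$. That is the decisive use of the flat connection beyond mere equivariance, and it is worth knowing as the cleaner alternative to your equivariance argument.
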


\begin{proof}
Since $E_\rho$ admits a flat connections, namely $\nabla^\rho$, the rational Chern
class $c_1(E_\rho) \, \in\, H^2(M,\, {\mathbb Q})$ vanishes. Hence from the definition
of degree it follows that $\text{degree}(E_\rho)\, =\, 0$.
Since $E_\rho$ is equivariant, it follows that $E_\rho$ is
semistable \cite[p.~279, Lemma 3.2]{Bi}.

We will now prove that $E_\rho$ is polystable.

Let
\begin{equation}\label{f2}
W\, \subset\, E_\rho
\end{equation}
be the unique maximal polystable subsheaf of degree zero in $E_\rho$ given by
Proposition \ref{propsocle} in Section \ref{uniqueness}. Consider the action of $G$ on the 
equivariant bundle $E_\rho$. From the uniqueness of $W$
it follows that the action of $G$ on $E_\rho$ preserves the subsheaf $W$ 
in \eqref{f2}; indeed, the subsheaf $g\cdot W\, \subset\, E_\rho$ given by the action on $W$
of any fixed $g\, \in\, G$ is again a maximal polystable subsheaf of degree zero, and hence
$g\cdot W$ coincides with $W$.
This implies that $W$ is a subbundle of $E_\rho$ (the subset of $M$ over which 
$W$ is subbundle is preserved by the action of $G$ on $M$, and hence it must be entire $M$ as 
the action is transitive). As noted earlier, the fiber of $E_\rho$ over the point $e\Gamma\, 
\in\, M$ is identified with ${\mathbb C}^r$. The isotropy subgroup for $e\Gamma$ is $\Gamma$ 
itself. Since $W$ is preserved by the action of $G$ on $E_\rho$, we conclude that the subspace 
$W_{e\Gamma}\,\subset\, (E_\rho)_{e\Gamma}\,=\, {\mathbb C}^r$ is preserved by the action of 
$\Gamma$ on ${\mathbb C}^r$ given by $\rho$. Since $\rho$ is irreducible, it follows that 
$W_{e\Gamma}\,=\, (E_\rho)_{e\Gamma}$. This implies that $W\,=\, E_\rho$. Hence $E_\rho$ is 
polystable.

Let
$$
E_\rho\, =\, \bigoplus_{i=1}^n F_i
$$
be a decomposition of the polystable bundle $E_\rho$ into a direct sum of stable bundles.
Since $\text{degree}(E_\rho)\,=\, 0$, and $E_\rho$ is semistable, it follows that
$\text{degree}(F_i)\,=\, 0$ for all $1\, \leq\, i\, \leq\, n$.

Order the above vector
bundles $F_i$ in such a way that
\begin{enumerate}
\item $F_i$ is isomorphic to $F_1$ for all $1\, \leq\, i\, \leq\, \ell$, and

\item $F_i$ is not isomorphic to $F_1$ for every $\ell+1\, \leq\, i\, \leq\, n$.
\end{enumerate}
Note that $\ell$ may be $1$. Let
$$
F\, =\, \bigoplus_{i=1}^\ell F_i\, \subset\, \bigoplus_{i=1}^n F_i\,=\, E_\rho
$$
be the subbundle given by the direct sum of the first $\ell$ summands.
We will show that the flat connection $\nabla^\rho$ on $E_\rho$
preserves $F$.

Let $\Omega_M$ denote the holomorphic cotangent bundle of $M$. Consider the
composition
$$
F\, \hookrightarrow\, E_\rho \,\stackrel{\nabla^\rho}{\longrightarrow}\,
E_\rho\otimes \Omega_M\,\longrightarrow\, (E_\rho/F)\otimes \Omega_M\, ;
$$
it is an ${\mathcal O}_M$--linear homomorphism known as the second fundamental form
of $F$ for $\nabla^\rho$. We will denote this second fundamental form
by $S(\nabla^\rho,\, F)$. Now, $\Omega_M$ is trivial (a trivialization is given by
a right translation invariant trivialization of the holomorphic cotangent bundle of $G$).
Also, we have
$$
E_\rho/F \,=\, \bigoplus_{i=\ell+1}^n F_i\, .
$$
For any $\ell+1\, \leq\, i\, \leq\, n$, since $F_i$ is a stable bundle of degree
zero not isomorphic to the stable vector bundle $F_1$ of degree zero, it follows that
\begin{equation}\label{h00}
H^0(M, \, \text{Hom}(F_1,\, F_i))\,= H^0(M, \, F_i\otimes F^*_1)\,=\, 0\, .
\end{equation}
Since $F$ is a direct sum copies of $F_1$, and $\Omega_M$ is trivial, from \eqref{h00} we
conclude that $S(\nabla^\rho,\, F)\,=\, 0$. This implies that the connection $\nabla^\rho$ 
on $E_\rho$ preserves $F$.

Since $\nabla^\rho$ preserves $F$, and $\rho$ is irreducible, it follows that
$F\, =\, E_\rho$. This completes the proof of the theorem.
\end{proof}

\begin{remark}
If the homomorphism $\rho$ extends to a homomorphism $\widetilde{\rho}\, :\,
G\, \longrightarrow\, \text{GL}(r,{\mathbb C})$, then the vector bundle $E_\rho$ is
holomorphically trivial. Indeed, the map $G\times {\mathbb C}^r\, \longrightarrow\,
G\times {\mathbb C}^r$ that sends any $(z,\, v)$ to $(z,\, \widetilde{\rho}(z)(v))$
descends to a holomorphic isomorphism of $E_\rho$ with the trivial vector bundle $M\times
{\mathbb C}^r$ (recall that $E_\rho$ is a quotient of $G\times {\mathbb C}^r$). Therefore,
in that case the integer $n$ in Theorem \ref{thm1} is the rank $r$.
\end{remark}

\begin{remark}\label{rem2}
Since $E_\rho$ admits a flat connection, all the rational Chern classes of $E_\rho$ of
positive degree vanish \cite{At}. Therefore, the condition $E^{\oplus n}\,=\, E_\rho$ in Theorem
\ref{thm1} implies that all the rational Chern classes of $E$ of positive degree vanish.
\end{remark}

\section{Uniqueness of socle of semistable reflexive sheaves}\label{uniqueness}

The aim of this section is to prove the following Proposition \ref{propsocle} about semi-stable 
reflexive sheaves (see Definition 1.1.9 on page 6 in \cite{HL}). The analogous (weaker) statement 
for bundles is needed in the proof of Theorem \ref{thm1}.

\begin{proposition}\label{propsocle}
Consider a compact complex manifold $X$ equipped with a Gauduchon metric $\omega$.
Let $E$ be a semistable reflexive sheaf on $X$ of
slope $\mu$ (with respect to $\omega$). Then there is a unique polystable sheaf $E'\,\subset\, E$ with 
$\mu(E')\,=\,\mu$ that is maximal among all such subsheaves. 
\end{proposition}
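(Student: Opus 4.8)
The plan is to construct the maximal polystable subsheaf of slope $\mu$ (the socle) and then prove uniqueness via the standard argument that the sum of two polystable subsheaves of the same slope is again polystable of that slope. First I would observe that since $E$ is semistable of slope $\mu$, any subsheaf $V \subset E$ with $\mu(V) = \mu$ is automatically semistable: any subsheaf of $V$ has slope $\leq \mu = \mu(V)$. Among all saturated (equivalently, reflexive of the same rank as their saturation) subsheaves $V \subset E$ with $\mu(V) = \mu$, one can consider those that are in addition polystable. The trivial sheaf $0$ qualifies vacuously, so the set is nonempty; since $\mathrm{rank}(V) \leq \mathrm{rank}(E)$ is bounded, there is a subsheaf $E' \subset E$ of this type of maximal rank, and among those of maximal rank one of maximal degree (degree is bounded above by $\mu \cdot \mathrm{rank}$, so this is well-posed).

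The key step is to show this $E'$ contains every polystable subsheaf of slope $\mu$; this simultaneously gives maximality in the subsheaf-ordering sense and uniqueness. So let $E'' \subset E$ be any polystable subsheaf with $\mu(E'') = \mu$. I would consider the subsheaf $E' + E'' \subset E$ generated by both (more precisely its saturation $\widehat{E' + E''}$ inside $E$, which is reflexive). There is a natural surjection $E' \oplus E'' \twoheadrightarrow E' + E''$, and the quotient $E'+E''$ has slope $\leq \mu$ because $E$ is semistable. On the other hand $E' \oplus E''$ is polystable of slope $\mu$, hence semistable of slope $\mu$, so every quotient of it has slope $\geq \mu$; therefore $\mu(E'+E'') = \mu$. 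A quotient of a polystable sheaf by a subsheaf of the same slope, when it has the same slope, is again polystable — this is the Jordan–Hölder / socle formalism for semistable sheaves of a fixed slope, where the full subcategory of semistable sheaves of slope $\mu$ together with $0$ is abelian and the polystable objects are exactly the semisimple ones, closed under subobjects, quotients, and extensions-by-direct-sum in the appropriate sense. Hence $E' + E''$ is polystable of slope $\mu$; its saturation in $E$ is then also polystable of slope $\mu$ (saturating can only increase the degree while keeping the slope $\leq \mu$ by semistability, and $\geq\mu$ since the subsheaf already has slope $\mu$; a torsion-free quotient argument shows the saturation is still semisimple — alternatively, a maximal-slope reflexive subsheaf of slope $\mu$ containing a polystable one of slope $\mu$ inherits polystability). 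By maximality of $\mathrm{rank}(E')$ and then of $\deg(E')$, we must have $\widehat{E'+E''} = E'$, whence $E'' \subset E'$.

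The main obstacle I expect is the careful handling of reflexivity and saturation: the naive "sum of subsheaves" need not be saturated, and one must ensure that passing to saturations inside the ambient reflexive sheaf $E$ preserves polystability and does not change the slope. I would address this by working throughout with the abelian category $\mathcal{C}_\mu$ whose objects are semistable reflexive sheaves of slope $\mu$ on $(X,\omega)$ (kernels and cokernels taken in the category of coherent sheaves and then reflexivized, which stays in $\mathcal{C}_\mu$ by semistability on both sides), noting that $\omega$ being Gauduchon is exactly what makes the degree additive on short exact sequences and hence makes this category well-behaved; see the analogous treatment for the Gauduchon setting. In $\mathcal{C}_\mu$ the polystable sheaves are precisely the semisimple objects, the socle (largest semisimple subobject) exists and is unique by general abelian-category nonsense, and unwinding the definition of socle in $\mathcal{C}_\mu$ back into the category of coherent sheaves on $X$ yields exactly the statement: the unique maximal polystable subsheaf $E' \subset E$ of slope $\mu$. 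I would also remark that for the application in Theorem \ref{thm1} only the case where $E$ is locally free is needed, where saturation issues are milder, but the reflexive statement is not harder once the categorical framework is in place.
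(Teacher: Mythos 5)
Your overall strategy is the right one and is essentially the paper's: both proofs construct the socle, i.e.\ a maximal polystable subsheaf of slope $\mu$, and show it absorbs every other polystable subsheaf of that slope via a sum/intersection slope computation. The paper just organizes it in the opposite order (it defines $E'$ directly as the sum of all stable \emph{reflexive} subsheaves of slope $\mu$ and then proves by hand, via a maximal direct sum $E_1\oplus\cdots\oplus E_s$ and an intersection argument, that this sum is an honest direct sum of stable sheaves), whereas you pick a rank-maximal polystable subsheaf first and then argue it contains everything.

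The genuine gap is the step you delegate to ``general abelian-category nonsense'': the claim that $E'+E''$, and then its saturation $\widehat{E'+E''}$ in $E$, is \emph{literally} a direct sum of stable coherent subsheaves of $E$. For slope-(semi)stability the category $\mathcal{C}_\mu$ is not abelian as a subcategory of coherent sheaves; it only becomes abelian after passing to the quotient by sheaves supported in codimension $\geq 2$, and ``semisimple in that quotient category'' does not automatically descend to an honest direct-sum decomposition of a specific subsheaf of $E$. Concretely, your assertion that the saturation of a polystable subsheaf with codimension-$\geq 2$ torsion quotient ``is still semisimple'' is exactly the point at risk: a full-rank, same-slope modification of $\bigoplus_k P_k$ along a codimension-two locus need not split as a direct sum of stable sheaves. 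The paper closes precisely this hole with three concrete ingredients you would need to supply: (i) the Poincar\'e--Lelong/Gauduchon fact that a torsion sheaf has degree $\geq 0$, with equality forcing codimension-$\geq 2$ support --- this is what controls all the saturation and double-dual comparisons; (ii) the observation that every stable subsheaf of slope $\mu$ sits inside a \emph{reflexive} stable subsheaf of the same rank and slope, so one may build the socle out of reflexive summands only; and (iii) the extension property that a map from a full-rank subsheaf with codimension-$\geq 2$ cokernel into a reflexive sheaf extends (used to show a stable reflexive $F$ with $\mu(F\cap E'')=\mu$ actually lies inside the reflexive $E''$, rather than merely meeting it in full rank). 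With those three points made explicit your argument goes through; without them, the passage from the categorical socle back to an actual polystable subsheaf $E'\subset E$ is not justified. (Minor remark: once the slope is fixed at $\mu$ and the rank is maximal, the degree is determined, so your secondary maximization over degree is vacuous; what you actually need there is maximality with respect to inclusion, i.e.\ saturation, which is where the above issues resurface.)
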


\begin{proof}
Let $F\,\subset\, E$ be any subsheaf of slope $\mu(F)\,=\,\mu$.
Taking the double dual of the inclusion we see that $F^{**}$
embeds into $E^{**}=E$.
Moreover, since $F^{**}/F$ is a torsion sheaf it follows that 
$\mu(F)\,\leq\, \mu(F^{**})$. Indeed, by \cite[Ch.~V, pp.~166--167, Proposition~6.14]{Ko} the determinant
line bundle ${\mathcal L}$ of the torsion sheaf $F^{**}/F$ admits a nontrivial holomorphic section. Then we make use of the following Lemma (stated as
Proposition 1.3.5 on page 35 in \cite{LT}) which is a consequence of Poincar\'e-Lelong formula:

\begin{lemma} If the line bundle ${\mathcal L}$ admits a nontrivial holomorphic section $t$ with vanishing divisor $D_t$, then
${\rm degree}({\mathcal L}) \,=\, c \cdot {\rm Vol}_{\omega}(D_t)$, where $c$ is a positive constant and the volume ${\rm Vol}_{\omega}(D_t)$ of
$D_t$ is computed with the fixed Gauduchon metric $\omega$. In particular, if ${\mathcal L}$ is nontrivial and admits a nontrivial
holomorphic section, then ${\rm degree}({\mathcal L})\,>\,0.$
\end{lemma} 

Since $E$ is semistable,
it follows that $$\mu(F)\,=\,\mu(F^{**})\,=\,\mu\, ,$$ so that $F^{**}$ is semistable. Assume
that $F$ is stable and that $V \,\subset\, F^{**}$ is a non-trivial subsheaf of 
slope $\mu(V)\,=\,\mu$. Then $V \cap F$ is a subsheaf of the same slope and rank as $V$. 
Since $F$ is stable, one has $${\rm rank}(V)\,=\,{\rm rank}(F)
\,=\,{\rm rank}(F^{**})\, .$$ This shows that $F^{**}$
is stable as well. Therefore, every stable subsheaf of $E$ of slope
$\mu$ is contained in a reflexive stable subsheaf of the same rank and slope.

Let $E'\,\subset\, E$ be the sum of all stable reflexive subsheaves of slope $\mu$.
By the argument of the first paragraph, $E'$ contains all stable subsheaves of slope
$\mu$ in $E$. It suffices to show that $E'$ is polystable. Let $$E''\,:=\,E_1\oplus \cdots
\oplus E_s\,\subset\, E'$$ be a polystable subsheaf of $E'$ with a maximal number $s$ of stable
reflexive subsheaves $E_i$ of slope $\mu$. If $E''\,=\,E'$ there is nothing to show. 
Otherwise, there is a reflexive stable subsheaf $F\,\subset\, E'$ of slope $\mu$ 
not contained in $E''$. If $F\cap E''\,=\,0$, then $E''\oplus F\,\subset\, E'$, contradicting 
the maximality of $s$. Assume therefore that $F\cap E''\,\neq\, 0$. 

If $\mu(F\cap E'')\,<\,\mu$, then
$\mu(E''+F)\,=\,\mu(E''\oplus F/(E''\cap F))\,>\,\mu$, contradicting the semistability
of $E$. Hence $\mu(F\cap E'')\,=\,\mu$. Since $F$ is stable, $F\cap E''\,\subset\, F$
is a subsheaf of the same rank, so that $F/(F\cap E'')$ is a torsion module. Since
$E''$ is reflexive, the inclusion $F\cap E''\,\subset\, E''$ extends to $F\,\subset\, E''$,
contradicting the assumptions on $F$. 

This shows that $E'=E''$ is polystable. By construction, $E'$ is unique. 
\end{proof}

\section{Rank two flat bundles on quotients of $\text{SL}(2,{\mathbb C})$}\label{s3}

We set $r\,=\, 2$ in \eqref{e2}. Take any
irreducible $\rho$ as in Theorem \ref{thm1}. Let $\rho'\,:\, \Gamma\, \longrightarrow\,
{\rm PGL}(2, {\mathbb C})$ be the composition of $\rho$ with the canonical projection of
$\text{GL}(2,{\mathbb C})$ to ${\rm PGL}(2, {\mathbb C})$.

\begin{proposition}\label{prop1}
Assume that the homomorphism $\rho'$ does not extend to a homomorphism from
$G$. Then the vector bundle $E_\rho$ is stable.
\end{proposition}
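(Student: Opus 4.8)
The plan is to leverage Theorem \ref{thm1}, which gives that $E_\rho \,\cong\, E^{\oplus n}$ for a stable bundle $E$ of degree zero. Since $\rho$ has rank $2$, there are only two possibilities: either $n\,=\,1$, in which case $E_\rho \,=\, E$ is already stable and we are done, or $n\,=\,2$, in which case $E$ is a stable \emph{line} bundle (hence trivially stable) and $E_\rho \,\cong\, L^{\oplus 2}$ for a degree-zero line bundle $L$. The whole proposition therefore reduces to ruling out the second case under the hypothesis that $\rho'$ does not extend to $G$. So the key step is: \emph{if $E_\rho \,\cong\, L^{\oplus 2}$ then $\rho'$ extends to a homomorphism $G \,\longrightarrow\, \mathrm{PGL}(2,\mathbb C)$.}

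To carry this out, suppose $E_\rho \,\cong\, L^{\oplus 2}$ with $L$ a holomorphic line bundle of degree zero on $M$. First I would identify $L$: since $c_1(L)\,=\,0$ and, by Remark \ref{rem2}, all rational Chern classes vanish, and since $L$ is a direct summand of the flat bundle $E_\rho$, the line bundle $L$ is itself flat; indeed $L \,=\, \det E_\rho^{1/2}$ up to $2$-torsion, and $\det E_\rho$ is the flat line bundle associated to $\det\rho \,:\, \Gamma \,\longrightarrow\, \mathbb C^*$. Concretely, $L \,=\, E_\chi$ for some character $\chi \,:\, \Gamma \,\longrightarrow\, \mathbb C^*$ with $\chi^2 \,=\, \det\rho$. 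The projectivization $\mathbb P(E_\rho)$ is then the trivial $\mathbb P^1$-bundle $M \times \mathbb P^1$, because $\mathbb P(L^{\oplus 2}) \,=\, \mathbb P(\mathcal O_M^{\oplus 2}) \,=\, M\times\mathbb P^1$. On the other hand, $\mathbb P(E_\rho)$ is the flat $\mathbb P^1$-bundle associated to $\rho' \,:\, \Gamma \,\longrightarrow\, \mathrm{PGL}(2,\mathbb C)$. So I must show: a flat $\mathbb P^1$-bundle over $M \,=\, G/\Gamma$ that is holomorphically trivial comes from a representation $\rho'$ that extends over $G$.

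The mechanism here is exactly the analogue, for the adjoint group $\mathrm{PGL}(2,\mathbb C)$, of the cited criterion \cite[p.~801, Proposition 3.1]{Wi} for $\mathrm{GL}(r,\mathbb C)$: a flat principal $H$-bundle over $G/\Gamma$ given by $\rho'\,:\,\Gamma\,\longrightarrow\, H$ is holomorphically trivial (as a holomorphic principal bundle) if and only if $\rho'$ extends to a holomorphic homomorphism $G \,\longrightarrow\, H$. For $H \,=\, \mathrm{PGL}(2,\mathbb C)$ I would either invoke this directly or reprove it: the pullback of the flat $\mathbb P^1$-bundle to $G$ is trivialized $\Gamma$-equivariantly iff one has a holomorphic map $G \,\longrightarrow\, \mathrm{PGL}(2,\mathbb C)$, $z \,\longmapsto\, \psi(z)$, with $\psi(z\gamma) \,=\, \psi(z)\rho'(\gamma)$; normalizing $\psi(e)\,=\, I$ forces $\psi$ to be a homomorphism extending $\rho'$. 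The holomorphic triviality of $\mathbb P(E_\rho) \,=\, \mathbb P(L^{\oplus 2})$ as a holomorphic $\mathbb P^1$-bundle is automatic from $\mathbb P(L^{\oplus2})\,\cong\, M\times\mathbb P^1$, so this gives the extension of $\rho'$ and the desired contradiction.

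The main obstacle I anticipate is the last step: making precise that holomorphic triviality of the associated $\mathbb P^1$-bundle (rather than of a $\mathrm{GL}(2)$- or $\mathrm{SL}(2)$-bundle) is equivalent to extendability of $\rho'$ over $G$, since \cite{Wi} is phrased for $\mathrm{GL}(r,\mathbb C)$. One has to be a little careful because $\mathbb P(E_\rho)\,\cong\, \mathbb P(E_\rho\otimes\mathcal L)$ for any line bundle $\mathcal L$, so triviality of $\mathbb P(E_\rho)$ only forces $E_\rho\otimes\mathcal L$ to be trivial for \emph{some} $\mathcal L$; one then checks this $\mathcal L$ is necessarily flat (as above, by Chern class vanishing on $M \,=\, G/\Gamma$) and applies the $\mathrm{GL}(2,\mathbb C)$ statement of \cite{Wi} to $E_\rho\otimes\mathcal L \,=\, E_{\rho\otimes\chi'}$ to extend $\rho\otimes\chi'$, hence $\rho'$, over $G$. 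Once this equivalence is in hand, the proof is complete: the hypothesis that $\rho'$ does not extend rules out $n\,=\,2$, so $n\,=\,1$ and $E_\rho\,=\,E$ is stable.
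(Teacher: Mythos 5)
Your proposal is correct and follows essentially the same route as the paper: Theorem \ref{thm1} reduces the question to ruling out $E_\rho \,\cong\, L^{\oplus 2}$, whose projectivization is holomorphically trivial, and then \cite[p.~801, Proposition 3.1]{Wi} (which applies to flat bundles with an arbitrary complex Lie group as structure group, in particular to the flat $\mathrm{PGL}(2,{\mathbb C})$-bundle ${\mathbb P}(E_\rho)$) yields the extension of $\rho'$ to $G$, contradicting the hypothesis. Your fallback argument via twisting by a line bundle $\mathcal L$ is not needed, and note that the flatness of such an $\mathcal L$ would not follow from Chern class vanishing alone on a non-K\"ahler $M$, so the direct $\mathrm{PGL}$ form of Winkelmann's criterion is the cleaner (and the paper's) way to finish.
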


\begin{proof}
Assume that $E_\rho$ is not stable. From Theorem \ref{thm1} it follows that
$E_\rho\,=\, L\oplus L$, where $L$ is a holomorphic line bundle. Hence the
projective bundle ${\mathbb P}(E_\rho)$ is holomorphically trivial. Now
\cite[p.~801, Proposition 3.1]{Wi} contradicts the given condition that
$\rho'$ does not extend to a homomorphism from $G$.
\end{proof}

\begin{remark}\label{rem-l}
In an earlier proof of Proposition \ref{prop1}, a longer argument was given after
proving that ${\mathbb P}(E_\rho)$ is holomorphically trivial. The referee pointed
out that \cite[p.~801, Proposition 3.1]{Wi} completes the proof at this point.
This also enabled us to remove the assumption $G\,=\, \text{SL}(2,{\mathbb C})$
in the earlier version.
\end{remark}

Let $\rho,\, \eta\, :\, \Gamma\, 
\longrightarrow\, \text{GL}(2,{\mathbb C})$ be two irreducible homomorphisms such that
\begin{itemize}
\item neither of the two corresponding homomorphisms $\rho',\, \eta'\, :\, \Gamma\, \longrightarrow
\, {\rm PGL}(2, {\mathbb C})$ extends to a homomorphism from $G$, and

\item for the action $\rho\otimes\eta^*$ of $\Gamma$ on $\text{End}({\mathbb C}^2)\,=\,
{\mathbb C}^2\otimes ({\mathbb C}^2)^*$,
where $\eta^*$ is the action of $\Gamma$ on $({\mathbb C}^2)^*$ corresponding
to its action on ${\mathbb C}^2$ given by $\eta$, the vector space
$\text{End}({\mathbb C}^2)$ decomposes into a direct sum of irreducible
$\Gamma$--modules such that all the direct summands are nontrivial and the
action of $\Gamma$ on none of them extends to an action of $G$.
\end{itemize}
{}From Proposition \ref{prop1} we know that the associated vector bundles
$E_\rho$ and $E_\eta$ are stable.

\begin{lemma}\label{lem1}
The two stable vector bundle $E_\rho$ and $E_\eta$ are not isomorphic.
\end{lemma}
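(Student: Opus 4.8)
The plan is to argue by contradiction: suppose $E_\rho\,\cong\, E_\eta$ as holomorphic vector bundles. Since both bundles are equipped with flat connections coming from representations of $\Gamma\,=\,\pi_1(M)$ (up to the usual identification for the quotient $M\,=\,G/\Gamma$), I would first want to translate an isomorphism of the holomorphic bundles into information about the representations. An isomorphism $E_\rho\,\cong\, E_\eta$ gives a nowhere-vanishing holomorphic section of $\text{Hom}(E_\rho,\, E_\eta)\,=\, E_\eta\otimes E_\rho^*\,=\, E_{\eta\otimes\rho^*}$, the flat bundle associated to the representation $\eta\otimes\rho^*$ of $\Gamma$ on $\text{End}({\mathbb C}^2)$. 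So the heart of the matter is to understand the space of holomorphic sections $H^0(M,\, E_{\eta\otimes\rho^*})$.

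The key step is then to analyze $H^0(M,\, E_{\eta\otimes\rho^*})$ using the hypothesis that $\text{End}({\mathbb C}^2)$ decomposes under $\rho\otimes\eta^*$ (equivalently $\eta\otimes\rho^*$) into a direct sum of irreducible nontrivial $\Gamma$-modules, none of whose actions extends to $G$. Correspondingly $E_{\eta\otimes\rho^*}$ splits as a direct sum of flat bundles $E_{\sigma_j}$ for irreducible representations $\sigma_j$. By Theorem \ref{thm1}, each $E_{\sigma_j}$ is of the form $E^{\oplus n_j}_j$ with $E_j$ stable of degree zero. By the Remark following Theorem \ref{thm1}, $E_{\sigma_j}$ is holomorphically trivial if and only if $\sigma_j$ extends to a homomorphism from $G$; since by hypothesis none of the $\sigma_j$ extends, none of the $E_{\sigma_j}$ is trivial. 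I would then invoke the cited result \cite[p.~801, Proposition 3.1]{Wi} (or argue directly): for a nontrivial irreducible $\sigma_j$, the flat bundle $E_{\sigma_j}$, being $E_j^{\oplus n_j}$ with $E_j$ stable of degree zero and nontrivial, has $H^0(M,\, E_{\sigma_j})\,=\,0$ — a stable bundle of degree zero that is nontrivial admits no nonzero holomorphic section, because such a section would generate a subsheaf of degree $\geq 0$ with quotient forcing either triviality or a contradiction with stability. Summing over $j$ gives $H^0(M,\, E_{\eta\otimes\rho^*})\,=\,0$, so there is no nonzero holomorphic homomorphism $E_\rho\,\to\, E_\eta$ at all, contradicting the assumed isomorphism.

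The main obstacle I anticipate is the vanishing $H^0(M,\, E_{\sigma_j})\,=\,0$ for each nontrivial irreducible summand: one must be careful that "stable of degree zero and nontrivial implies no sections" is correctly applied in the balanced (non-Kähler) setting, and that triviality of the associated flat bundle is genuinely equivalent to the representation extending to $G$ — this is exactly the content of \cite{Wi} and of the Remark after Theorem \ref{thm1}, so the ingredients are in place, but the bookkeeping of which summands are trivial must match the hypotheses precisely. A secondary point is to make sure the decomposition of $\text{End}({\mathbb C}^2)$ under $\rho\otimes\eta^*$ really does transport to a holomorphic direct sum decomposition of $E_{\eta\otimes\rho^*}$ compatible with the flat structure, which is immediate from functoriality of the $\rho\mapsto E_\rho$ construction but should be stated. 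Once these are settled, the contradiction is immediate and the lemma follows.
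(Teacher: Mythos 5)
Your proposal is correct and follows essentially the same route as the paper: reduce the lemma to the vanishing of $H^0(M,\,\mathrm{Hom})$ between the two bundles, decompose the flat endomorphism bundle according to the hypothesized decomposition of $\mathrm{End}({\mathbb C}^2)$ into irreducible $\Gamma$--modules, apply Theorem \ref{thm1} to write each summand as a power of a stable degree-zero bundle, use \cite[Proposition 3.1]{Wi} to rule out triviality of these summands, and conclude via the fact that a nontrivial stable bundle of degree zero has no nonzero sections. The only (immaterial) difference is that you work with $\mathrm{Hom}(E_\rho,\,E_\eta)$ rather than $\mathrm{Hom}(E_\eta,\,E_\rho)$, which merely dualizes the representation of $\Gamma$ on $\mathrm{End}({\mathbb C}^2)$.
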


\begin{proof}
It suffices to show that
\begin{equation}\label{s}
H^0(M,\, \text{Hom}(E_\eta,\, E_\rho))\,=\, 0\, .
\end{equation}
Since $\text{End}({\mathbb C}^2)$ decomposes into a direct sum of irreducible
$\Gamma$--modules, from Theorem \ref{thm1} we know that
\begin{equation}\label{fi}
\text{Hom}(E_\eta,\, E_\rho)\,=\, \bigoplus_{i=1}^b (F_i)^{\oplus d_i}\, ,
\end{equation}
where each $F_i$ is a stable vector bundle of degree zero (here $b$ is the
number of irreducible $\Gamma$--modules in $\text{End}({\mathbb C}^2)$). To prove
\eqref{s} it is enough to show that each $F_i$ is nontrivial, because a nontrivial
stable vector bundle of degree zero does not admit any nonzero section.

Let $V$ be an irreducible $\Gamma$ module and $E_V$ the corresponding vector bundle
on $M$. If $E_V$ is trivial, then
the action of $\Gamma$ on $V$ extends to an action of $G$ \cite[p.~801, Proposition 3.1]{Wi}. 
Therefore, each $F_i$ in \eqref{fi} is nontrivial.
\end{proof}

\section*{Acknowledgements}

We thank the referee for detailed comments, in particular on Proposition \ref{prop1}
(mentioned in Remark \ref{rem-l}). IB is supported by a J. C. Bose Fellowship.



\begin{thebibliography}{123}

\bibitem{AB} L. Alessandrini and G. Bassanelli, Compact p-K\"ahler manifolds, \textit{Geom. 
Dedicata} {\bf 38} (1991), 199--210.

\bibitem{At} M. F. Atiyah, Complex analytic connections in fibre
bundles, \textit{Trans. Amer. Math. Soc.} \textbf{85} (1957), 181--207.

\bibitem{Bi} I. Biswas, Semistability of invariant bundles over
$G/\Gamma$, II, {\it Com. Ren. Acad. Sci.} \textbf{350} (2012), 277--280.

\bibitem{BD} I. Biswas and S. Dumitrescu, Holomorphic affine connections on non-K\"ahler 
manifolds, \textit{Internat. Jour. Math.} {\bf 27}, no. 11 (2016).

\bibitem{DM} S. Dumitrescu and B. McKay, Symmetries of holomorphic geometric structures on complex 
tori, \textit{Complex Manifolds} {\bf 3} (2016), 1--15.

\bibitem{FY} J. Fu and S.-T. Yau, A note on amll deformations of balancd manifolds, \textit{Com. 
Ren. Acad. Sci.} \textbf{349} (2011), 793--796.

\bibitem{Gh} E. Ghys, D\'eformations des structures complexes sur les espaces homog\`enes de 
$\text{SL}(2,\mathbb {C})$, \textit{Jour. Reine Angew. Math.} {\bf 468} (1995), 113--138.

\bibitem{Gh2} E. Ghys, Feuilletages holomorphes de codimension un sur les espaces homog\`enes 
complexes, \textit{Ann. Fac. Sci. Toulouse} {\bf 5} (1996), 493--519.

\bibitem{Gro} M. Gromov, \textit{Rigid transformation groups, G\'eom\'etrie Diff\'erentielle,} 
(Editors D. Bernard and Choquet-Bruhat Ed.), Travaux en cours, Hermann, Paris, {\bf 33}, (1988), 
65--141.

\bibitem{HL} D. Huybrechts and M. Lehn, \textit{The geometry
of moduli spaces of sheaves}, Aspects of Mathematics, E31,
Friedr. Vieweg \& Sohn, Braunschweig, 1997.

\bibitem{Ko} S. Kobayashi, \textit{Differential geometry
of complex vector bundles}, Princeton University Press, Princeton,
NJ, Iwanami Shoten, Tokyo, 1987.

\bibitem{La} M. Lackenby, Some 3--manifolds and 3--orbifolds with large
fundamental group, \textit{Proc. Amer. Math. Soc.} {\bf 135} (2007), 3393--3402. 

\bibitem{LT} M. L\"ubke and A. Teleman, \textit{The Kobayashi-Hitchin correspondence}, World Scientific, River Edge, NJ, (1995).

\bibitem{Mi} J. Milson, A remark on Ragunathan's vanishing theorem, \textit{Topology} 
\textbf{24} (1985), 495--498.

\bibitem{Wa} H.-C. Wang, Complex Parallelisable manifolds, \textit{Proc. Amer. Math. Soc.} 
\textbf{5} (1954), 771--776.

\bibitem{Wi} J. Winkelmann, Flat vector bundles over parallelizable manifolds, \textit{Forum 
Math.} \textbf{13} (2001), 795--815.

\end{thebibliography}
\end{document}